\documentclass[11pt]{amsart}

\usepackage{subfiles}
\usepackage{comment}
\usepackage{float}
\usepackage{marginnote}
\usepackage{tabu}
\usepackage{euscript}
\usepackage[dvipsnames]{xcolor}   		             		
\usepackage{graphicx}			
\usepackage{amssymb}
\usepackage{mathrsfs}
\usepackage{amsthm}
\usepackage{amsmath}
\usepackage{stmaryrd}
\usepackage{tikz}
\usepackage{tikz-cd}
\usepackage{accents}
\usepackage{upgreek}
\usepackage{enumerate}
\usepackage{bm}
\usepackage{mathtools}
\usepackage[all]{xy}
\usepackage{caption}
\usepackage{url}
\usepackage{float}
\usepackage{todonotes} 
\usepackage{colonequals}
\usepackage{bbm}
\usepackage{longtable}
\usepackage[full]{textcomp}
\usepackage[cal=cm]{mathalfa}
\usepackage{xparse}
\usepackage{comment}
\usepackage[maxnames=50]{biblatex}

\DeclareDelimFormat{multicitedelim}{\addcomma\space}

\usetikzlibrary{calc}
\usetikzlibrary{fadings}
\usetikzlibrary{decorations.pathmorphing}
\usetikzlibrary{decorations.pathreplacing}
\usepackage{tikz,tikz-cd,tikz-3dplot}
\usepackage{pgfplots}
\usetikzlibrary{arrows,shadows,positioning, calc, decorations.markings, 
hobby,quotes,angles,decorations.pathreplacing,intersections,shapes}
\usepgflibrary{shapes.geometric}
\usetikzlibrary{fillbetween,backgrounds}

\usepackage{colonequals}

\usepackage[margin=1.05in]{geometry}

\tikzset{
  commutative diagrams/.cd, 
  arrow style=tikz, 
  diagrams={>=stealth}
}
\tikzset{
  arrow/.pic={\path[tips,every arrow/.try,->,>=#1] (0,0) -- +(0,4pt);},
  pics/arrow/.default={triangle 90}
}
\tikzset{->-/.style={decoration={
  markings,
  mark=at position .6 with {\arrow{latex}}},postaction={decorate}}
  }
\tikzset{
  c/.style={every coordinate/.try}
}

\theoremstyle{theorem}

\usepackage{hyperref}
\hypersetup{
  colorlinks   = true,          
  urlcolor     = olive,          
  linkcolor    = olive,          
  citecolor   = olive             
}

\makeatletter
\def\@tocline#1#2#3#4#5#6#7{\relax
  \ifnum #1>\c@tocdepth 
  \else
    \par \addpenalty\@secpenalty\addvspace{#2}%
    \begingroup \hyphenpenalty\@M
    \@ifempty{#4}{%
      \@tempdima\csname r@tocindent#1\endcsname\relax
    }{%
      \@tempdima#4\relax
    }%
    \parindent\z@ \leftskip#3\relax \advance\leftskip\@tempdima\relax
    \rightskip\@pnumwidth plus4em \parfillskip-\@pnumwidth
    #5\leavevmode\hskip-\@tempdima
      \ifcase #1
       \or\or \hskip 1em \or \hskip 2em \else \hskip 3em \fi%
      #6\nobreak\relax
    \dotfill\hbox to\@pnumwidth{\@tocpagenum{#7}}\par
    \nobreak
    \endgroup
  \fi}
\makeatother

\newcounter{marginnote}
\DeclareMathAlphabet{\mathpzc}{OT1}{pzc}{m}{it}

\theoremstyle{definition}
\newtheorem{theorem}{Theorem}[section]

\newtheorem{lemma}[theorem]{Lemma}
\newtheorem{proposition}[theorem]{Proposition}
\newtheorem{remark}[theorem]{Remark}

\newtheorem*{runningexample*}{Running example}

\newtheorem*{aside*}{Aside}

\newtheorem{definition}[theorem]{Definition}
\newtheorem{example}[theorem]{Example}

\newtheorem{proposition-definition}[theorem]{Proposition-Definition}

\DeclareMathOperator{\ev}{ev}

\DeclareMathOperator{\Hom}{Hom}

\newcommand{\RR}{\mathbb{R}}

\newcommand{\Gm}{\mathbb{G}_{\operatorname{m}}}

\newcommand{\bcd}{\begin{center}\begin{tikzcd}}
\newcommand{\ecd}{\end{tikzcd}\end{center}}

\newcommand{\PP}{\mathbb{P}}

\newcommand{\N}{\mathbb{N}}
\newcommand{\Z}{\mathbb{Z}}

\newcommand{\A}{\mathbb{A}}

\newcommand{\vir}{\text{\rm vir}}

\newcommand{\calO}{\mathcal{O}}

\newcommand{\CC}{\mathbb{C}}

\newcommand{\LogPk}{\overline{\mathcal{M}}_{0, \upalpha}(\PP^k, \partial \PP^k)}
\newcommand{\LogPkint}{\mathcal{M}_{0, \upalpha}(\PP^k, \partial \PP^k)}
\newcommand{\LogXDint}{\mathcal{M}_{0,\upalpha}(X,D)}
\newcommand{\LogXD}{\overline{\mathcal{M}}_{0, \upalpha}(X,D)}

\begin{document}
 
\title{Upper Bounds for Logarithmic Gromov--Witten Invariants of Projective Space}
\author{Dan Simms}

\begin{abstract}
    We provide an upper bound for the 
    genus zero logarithmic Gromov--Witten invariants of projective space relative
    to its toric boundary. The upper bound is polynomial in the 
    contact orders, with degree depending on the number of marked points.
    The result hinges on positivity of intersections for projective 
    spaces.
\end{abstract}

\maketitle
\setcounter{tocdepth}{1}
\tableofcontents

\section{Introduction}

Consider the space $\mathcal{M}_{g, \upalpha}(X,D)$ of maps 
\[(C, p_1 + \cdots + p_n) \to (X, D_1 + \cdots + D_k)\]
from a smooth genus $g$ curve $C$ to a smooth projective variety
$X$ with simple normal crossings divisor $D = D_1 + \cdots 
+ D_k,$ and fixed tangency orders $\upalpha_{ij}$ between the
marked point $p_i$ and the component $D_j$ of $D$.
Logarithmic Gromov--Witten theory, developed in \cite{GS2013logarithmic, Chen, AbramChen},
enumerates these maps by compactifying 
\[\mathcal{M}_{g, \upalpha}(X,D) \subset 
\overline{\mathcal{M}}_{g, \upalpha}(X,D)\] 
to the space of logarithmic stable maps to $X$, and 
performing intersections against a virtual fundamental class. 
Concretely we use the evaluation maps,
$\ev_i: \overline{\mathcal{M}}_{g,\upalpha}(X,D) \to X,$ 
to define
\[\langle \upgamma_1, \dots, \upgamma_n \rangle^{X \vert D}_{g, \upalpha} \coloneq \int_{[\overline{\mathcal{M}}_{g,\upalpha}
(X,D)]^{\operatorname{vir}}} \prod_i \ev_i^* (\upgamma_i),\]
for $\upgamma_i \in H^*(X).$

In this article, we provide an upper bound for the genus 0 
primary invariants of $\PP^k$ relative its toric boundary
by alternatively compactifying
\[ \LogPkint \subset \PP^{n-3} \times \PP^k.\] 
Here one makes use of an isomorphism
$\LogPkint \cong \mathcal{M}_{0,n} \times T,$ where $T \subset
\PP^k$ is the dense torus. 
The invariants can be expressed as a transverse intersection
in the interior, while the extra contributions from the 
boundary contribute positively because the compactification
is a product of projective spaces. What remains is to perform 
a calculation on this product, whose intersection theory is simple. 

\subsection{Main Result}
Fix a projective space $\PP^k$, some number $n$ of marked points, and
natural numbers $\upalpha_{ij}$ representing the contact order between 
the $i^{th}$ marked point and the $j^{th}$ component of the toric 
boundary of $\PP^k,$ and suppose further that some marked point is 
mapped into the torus of $\PP^k$. That is, there exists $i$ such that
all $\upalpha_{ij} = 0.$

\begin{theorem}\label{Main}
    For each marking $i$, let 
    $\upalpha^{\max}_i = \max_j \upalpha_{ij}.$ Then 
    \[\langle d_1 H^{\upnu_1}, \dots, d_n H^{\upnu_n} 
    \rangle^{\PP^k \vert \partial \PP^k}_{0,\upalpha} \leq 
    d_1 \cdots d_n \binom{n-3+k-\upnu}{n-3}
    \left( \sum_{i=1}^n \upalpha^{\max}_i \right)^{n-3},\]
    where $\upnu$ is the maximal codimension of an insertion at a 
    marked point with no tangency to the boundary. In particular, 
    if one imposes a point constraint at such a point this binomial
    coefficient is 1. 
\end{theorem}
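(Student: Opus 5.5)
Our approach is to compare the logarithmic moduli space with the alternative compactification $\PP^{n-3}\times\PP^k$ of the interior $\LogPkint\cong\mathcal{M}_{0,n}\times T$, where $T\subset\PP^k$ is the dense torus. We fix the marking $i_0$ with no tangency to the boundary that carries the insertion of maximal codimension $\upnu$. The isomorphism is set up so that the $T$-coordinate is $\ev_{i_0}$. Concretely, for a point $(C,p_\bullet)\in\mathcal{M}_{0,n}$ the map $C\setminus\{p_i\}\to T$ is determined up to translation by the contact orders: in coordinates $x_1,\dots,x_k$ on $T$ it is a monomial $x_j=\lambda_j\prod_i (z-z_i)^{\upalpha_{ij}-\upalpha_{i0}}$. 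We normalise $\lambda$ by the value at $p_{i_0}$.

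Each evaluation map $\ev_i$ then becomes a rational map $\PP^{n-3}\times\PP^k\dashrightarrow\PP^k$. We choose coordinates on $\mathcal{M}_{0,n}\subset\PP^{n-3}$ by fixing three points at $0,1,\infty$ (one of them $p_{i_0}$) and letting the remaining $n-3$ positions $z_i$ be affine coordinates. In these coordinates $\ev_i$ is given by $\ev_{i_0}$ multiplied coordinatewise by monomials in the differences $z_i-z_l$. Clearing denominators, its components are bihomogeneous of bidegree $(e_i,1)$ in $(\PP^{n-3},\PP^k)$. Here $e_i$ is bounded by a sum of contact orders. After a careful choice of the three fixed points, we aim for $e_i\le \sum_l \upalpha^{\max}_l$.

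Next we fix a generic representative of each $H^{\upnu_i}$, a general linear subspace $L_i$ of codimension $\upnu_i$. The locus $\ev_i^{-1}(L_i)$ is then cut out in $\PP^{n-3}\times\PP^k$ by $\upnu_i$ divisors of class $e_iA+B$, where $A,B$ are the hyperplane classes. For $i_0$, the locus is cut out by $\upnu$ divisors of class $B$ only.

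The main step, which we expect to be the obstacle, is to show two things.
\begin{enumerate}
\item The invariant equals the number of points of $\bigcap_i\ev_i^{-1}(L_i)$ lying in the interior $\mathcal{M}_{0,n}\times T$.
\item This intersection is transverse there.
\end{enumerate}
For (1) we would argue that for generic torus-translates of the $L_i$, all solutions on the logarithmic space avoid the boundary strata, by a dimension count on boundary strata together with the virtual class being fundamental on the interior. For (2) we use generic translation by $T$ and Kleiman--Bertini.

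Given this, the interior count is a subset of the full intersection of these effective divisors on the homogeneous space $\PP^{n-3}\times\PP^k$. On the extra boundary components, possibly excess-dimensional, positivity of intersections still applies, since the divisor classes are nef and globally generated. So the interior count is at most the intersection number
\[\int_{\PP^{n-3}\times\PP^k} B^{\upnu}\prod_{i\neq i_0}(e_iA+B)^{\upnu_i},\]
with the factor $d_1\cdots d_n$ pulled out by linearity.

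To finish we expand. We need the coefficient of $A^{n-3}B^{k}$. The factor $B^\upnu$ is already present, so we must select $A$ from exactly $n-3$ of the remaining $\sum_{i\ne i_0}\upnu_i=n-3+k-\upnu$ linear factors. This is forced by dimension, since $\sum_i\upnu_i=n-3+k$. Each selected $A$ contributes a coefficient at most $\sum_l\upalpha^{\max}_l$. Hence the number is at most $\binom{n-3+k-\upnu}{n-3}\left(\sum_l\upalpha^{\max}_l\right)^{n-3}$, as claimed. When $\upnu=k$ the binomial coefficient equals $1$.
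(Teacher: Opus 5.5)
\noindent Your architecture matches the paper's: identify $\LogPkint\cong\mathcal{M}_{0,n}\times T$ via $\ev_{i_0}$, obtain a transverse interior count from Kleiman, bound it by an intersection number on $\PP^{n-3}\times\PP^k$ using positivity, and expand $B^{\upnu}\prod_{i\neq i_0}(e_iA+B)^{\upnu_i}$. Intersecting the full zero loci of the bidegree-$(e_i,1)$ polynomials, rather than the closures $\overline{\ev_i^{-1}H}$, is fine: their interior parts are exactly $\ev_i^{-1}H$, and the extra components lie in the boundary.

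What is missing is the step that carries the quantitative content, namely $e_i\le\sum_l\upalpha^{\max}_l$, which you only say you ``aim for''. This is the paper's Proposition~\ref{classbound}, and it is not automatic. Suppose $p_{i_0}$ sits at a finite point, as in the paper, or at $0$ or $1$ in your setup. Normalising by $\ev_{i_0}$ gives $\ev_i=[A_jf_j(z_i)/f_j(z_{i_0})]_j$ with $f_j(z)=\prod_l(z-z_l)^{\upalpha_{lj}}$, and the denominators depend on $j$. Clearing them by their product $\prod_l f_l(z_{i_0})$ only gives bidegree up to $((k+1)d,1)$, which would put $(k+1)d$ into the bound. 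The paper's extra idea is to clear instead by the least common multiple $\prod_v(z_{i_0}-z_v)^{\upalpha^{\max}_v}$. Equivalently, cancel the common factor $\prod_v(X_{i_0}-X_v)^{\sum_l\upalpha_{vl}-\upalpha_v^{\max}}$, which is a unit on $\mathcal{M}_{0,n}$, so the interior divisor is unchanged. The $j$-th term then has degree at most $d+\sum_v(\upalpha^{\max}_v-\upalpha_{vj})=\sum_v\upalpha^{\max}_v$.

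Your own setup contains a cleaner fix if you make the ``careful choice'' explicit by sending $p_{i_0}\mapsto\infty$. Since $\upalpha_{i_0j}=0$ and $\sum_l\upalpha_{lj}=d$ for every $j$, each $f_j$ is then monic of degree $d$. Hence $f(\infty)=[\uplambda_0:\cdots:\uplambda_k]$, i.e.\ $\uplambda_j=A_j$ with no denominators at all, and $\ev_i=[A_0f_0(z_i):\cdots:A_kf_k(z_i)]$ has bidegree at most $(d,1)$. As $d=\sum_l\upalpha_{lj}\le\sum_l\upalpha^{\max}_l$, this proves the theorem. It even gives the sharper bound with $d^{n-3}$ in place of $(\sum_l\upalpha^{\max}_l)^{n-3}$, which equals the invariant $d^2$ in the maximal contact example.

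Two smaller points on your step (1). First, Kleiman's theorem needs a transitive group, so use general linear subspaces and $\operatorname{PGL}_{k+1}^n$ acting on $(\PP^k)^n$, or argue torus-orbit by torus-orbit. Second, you need $\LogPk$ irreducible of the expected dimension with $[\LogPk]^{\vir}=[\LogPk]$, not just fundamentality on the interior; the paper's background section supplies both.
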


We illustrate the types of upper bounds one obtains with a 
couple of concrete examples.

\begin{itemize}
    \item \textbf{Maximal Contact.} Suppose we count curves which are
    maximally tangent to the toric boundary of $\PP^k$, passing through 
    2 general points. That is, we count 
    curves whose intersection with the $i^{th}$ coordinate hyperplane 
    $H_i$ consists of a single point, necessarily tangent to order 
    $d$ by B\'{e}zout's theorem. Theorem \ref{Main} gives an upper bound 
    for the number of such curves as \[\left((k+1)d\right)^k.\]
    \item \textbf{Characteristic Numbers.} Let $N_d$ denote 
    the number of rational degree $d$ plane curves passing 
    through $3d-1$ general points. These numbers have a storied 
    history, and can be computed recursively \cite{KontsevichManin}.
    It is possible to express $N_d$ as a logarithmic 
    Gromov--Witten invariant as follows. Let $\upalpha = (\upalpha_{ij})_{ij}$
    denote the following tangencies. 
    \begin{table}[H]
        \centering
        \begin{tabular}{c|c c c}
             & $\, H_1 \,$ & $\, H_2 \,$ & $\, H_3 \,$ \\ \hline
             $x_1, \dots, x_d$ & 1 & 0 &  0\\ 
             $y_1, \dots, y_d$ & 0 &  1 & 0 \\ 
             $z_1, \dots, z_d$ & 0 & 0 & 1 \\
             $p_1, \dots, p_{3d-1}$ & 0 & 0 & 0
        \end{tabular}
        \label{CharNumTangencies}
    \end{table}
    The $p_i$ are where the point constraints are imposed. Since they
    are generic points they can be chosen away from the boundary, 
    hence the contact order is zero.
    Meanwhile, $\{x_i, y_i, z_i\}$ represent the intersection points 
    between a generic curve as above with the toric boundary, with 
    generic tangency. The choice of labelling of these
    points introduces a factor of proportionality between the 
    logarithmic Gromov--Witten invariants and the classical count:
    \[\langle 1, \dots, 1, H^2, \dots, H^2 \rangle^{\PP^k \vert 
    \partial \PP^k}_{0, \upalpha}
    = (d!)^3 N_d.\]
    From this, Theorem \ref{Main} yields \[N_d \leq \frac{(3d)^{6d-4}}
    {(d!)^3}.\]
\end{itemize}

\subsection{Proof Outline} \label{outline}

The main point of the proof is that $\LogPkint$ is isomorphic to 
an open inside a product of projective spaces, where all 
contributions to intersection products are positive. The following 
example carries all the information one needs to concretely 
understand the interior $\LogPkint.$

\begin{example}\label{interiorEG}
    Suppose $f: \PP^1 \to \PP^2$ is a linear embedding 
meeting $\partial \PP^2$ transversely at three points. Up to 
automorphisms of $\PP^1$, we can take the three points to be $[0:1], 
[1:1], [1:0].$ Then necessarily $f$ is of the form \[[X:Y] \mapsto 
[\uplambda_0 X: \uplambda_1 (X-Y) : \uplambda_2 Y],\] with $\uplambda_i \in 
\CC^{\times}.$
The space of all such $f$ is therefore abstractly isomorphic to 
$\mathcal{M}_{0,3} \times (\CC^{\times})^2.$ We will see later that we can make 
this isomorphism explicit if there is a marked point mapping into the 
torus of $\PP^2.$
\end{example}
Note that both $\mathcal{M}_{0,n}$ and $(\CC^{\times})^k$ are each isomorphic to open 
subsets of a projective space, $\PP^{n-3}$ and $\PP^k$ respectively.

\begin{itemize}
    \item \textbf{Step 1.} Embed 
    $\LogPkint \hookrightarrow\PP^{n-3} \times \PP^k$ rather than 
    $\LogPkint\hookrightarrow \LogPk.$
\end{itemize}

Since we work in genus 0, there are no virtual considerations necessary.
The invariants are a transverse intersection in $\LogPkint$. That is, 
we can express the invariant as $\deg \prod_i A_i$ for some transverse 
cycles $A_i$ on $\LogPkint.$ One can then take closures $\overline{A_i}
\subset \PP^{n-3} \times \PP^k.$ The product $\prod_i \overline{A_i}$
decomposes as the transverse intersection in $\LogPkint$, whose degree
is the invariant as mentioned, as well as extra components in the 
boundary.

\begin{itemize}
    \item \textbf{Step 2.} Positivity of projective space ensures the extra contributions from the boundary cannot
    be negative. Therefore one obtains 
    \[ \langle \upgamma_1, \dots, \upgamma_n \rangle_{0,
    \upalpha}^{\PP^k \vert \partial \PP^k}
    \leq \deg \prod_i \overline{A_i} .\]
    \item \textbf{Step 3.} Perform a calculation on projective space to 
    determine an upper bound for the degree of $\prod_i 
    \overline{A}_i.$
\end{itemize}

\begin{remark}
    Projective space appears multiple times in this plan, for two 
    different reasons. The choice to compactify $\LogPkint$ with a 
    product of projective spaces is to leverage the positivity of its 
    tangent bundle. On the other hand, we consider maps to projective 
    space because their simple cohomology greatly restricts the possible
    insertions, thereby making explicit calculations feasible. 
\end{remark}

\begin{remark}
    The idea to use positive compactifications to prove bounds on 
    interesting algebro-geometric quantities also appears in 
    contemporaneous work \cite{FNSPositivity, BELL}, the former
    being the main inspiration behind this paper.
\end{remark}

\subsection{Future Work} The methods in this paper apply directly to 
any situation where one performs intersections on rational moduli spaces, or more generally on moduli spaces birational to a variety 
with globally generated tangent bundle. In particular, this method 
will certainly give upper bounds for genus zero logarithmic 
Gromov--Witten invariants of all smooth toric varieties. One expects
that these bounds can be made explicit in nice cases, such as 
products of projective spaces.

\subsection{Acknowledgments} The author gives deep and wholehearted 
thanks to Navid Nabijou for suggesting this project, and for his 
invaluable guidance and care across the years I have known him. 
Without him, this paper would not exist.

This work was supported by the Engineering and Physical 
Sciences Research Council \newline [EP/S021590/1]. The EPSRC Centre for 
Doctoral Training in Geometry and Number Theory (The London 
School of Geometry and Number Theory), University College 
London. The author also thanks Imperial College London.

\section{Background}

\subsection{Logarithmic Gromov--Witten Theory of Toric Varieties}
\label{logGWbackground}
Throughout this section let $(X,D)$ be a smooth projective toric pair, with torus $T \subset X$. 
Let $k = \dim X.$ As usual, we write
$N = \Hom(\Gm, T)$ for the lattice in which the fan $\Upsigma$ associated to $X$ lives, and 
$M = \Hom(T, \Gm).$

\begin{definition}
A choice of \textbf{numerical data} $\uplambda$ is a tuple $(v_1, \dots, v_n) \in N^n$ satisfying \[ \sum_i v_i = 0.\]
This constraint is often called the \textbf{balancing condition}.
\end{definition}
Much information can be extracted from a choice of numerical data 
$\uplambda.$
\begin{itemize}
\item \textbf{Number of marked points.} Take $n = \lvert \uplambda \rvert.$
\item \textbf{Tangency orders.} Since $X$ is smooth and proper, we can write each $v \in N$ uniquely as 
\[v = \sum_{v_{\uprho} \in \Upsigma(1)} \upalpha_{\uprho} v_{\uprho} \]
with $\upalpha_{\uprho} \in \N$ and $v_{\rho}$ primitive. 
Note that $\upalpha_{\uprho} \neq 0$ if and only if $\uprho$ is a 
generator for the minimal cone of $\Upsigma$ containing $v.$
Each $v_{\rho}$ corresponds to a component of the toric boundary 
$D$. Decomposing $v_i$ in this way produces a tuple $\upalpha_{ij},$ 
which we take to be tangency orders between the $i^{th}$ marking and 
$D_j$.
\item \textbf{Curve class.} Denote by $\upbeta$ the curve class of a map
$[f:C \to X].$ An operational Chow class on a toric variety is determined
by its pairings $\upbeta \cdot D_j$ with the components of the boundary via the theory of Minkowski weights
\cite[Theorem 2.1]{FultonSturmfels}. The balancing condition ensures 
that there is a curve class $\upbeta$ with $\upbeta \cdot D_j = \sum_i \upalpha_{ij}$ for all
$j$. This statement is simply the exactness of 
\[0 \to A_1(X) \to \Z^{\Upsigma(1)} \to N \to 0,\]
dual to the familiar \[0 \to M \to \Z^{\Upsigma(1)} \to 
\mathrm{Cl}(X) \to 0.\]
\end{itemize}

\begin{remark}
    One advantage of working with tangencies as elements of $N$ is the
    independence of the choice of toric variety compactifying the torus.
    This is beneficial, for example, if one obtains $X'$ from $X$ by 
    subdividing its fan. Then a choice of numerical data for $X$ as we 
    define it immediately lifts along the natural map $X' \to X$ to give 
    valid numerical data for a map to $X'$. This is much easier than 
    choosing a lift of a curve class and checking compatibilities.
\end{remark}

The only other invariant of stable maps constant in flat families is the 
genus. We record this separately.

\begin{definition}
    Denote by \[\LogXD\] the space of genus zero logarithmic stable 
    maps to $(X,D)$ with tangency orders $\upalpha$ 
    \cite{GS2013logarithmic, Chen, AbramChen}. This carries a 
    natural logarithmic structure; denote by 
    $$\LogXDint$$ the locus where the log structure is trivial. This 
    parametrises the maps from smooth curves, whose image 
    meets $D$ precisely at the points $p_i$ with the prescribed 
    tangencies. 
\end{definition}

\begin{proposition}(\cite[Proposition 3.3.3]
{Ranganathan_2017})
\label{intExplicit}
    Fix numerical data $\upalpha = (v_1, \dots, v_n),$ with $v_i=0$.
    Then the map \[ \mathrm{fgt} \times \ev_i: \LogXDint 
    \to \mathcal{M}_{0,n} \times T\] is an isomorphism, where 
    \[\mathrm{fgt}: \LogXDint \to \mathcal{M}_{0,n}\]
    is the forgetful map sending a map to its, necessarily 
    smooth, marked source curve. 
\end{proposition}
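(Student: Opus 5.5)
The plan is to construct an explicit inverse to $\mathrm{fgt} \times \ev_i$. Everything rests on the fact that a map from a punctured curve into the torus $T$ is the same thing as a group homomorphism from $M$ to the invertible functions on that punctured curve. Throughout, write $C^\circ = C \setminus \{p_1, \dots, p_n\}$.

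\textbf{Step 1: pointwise.} Fix a point of $\LogXDint$, i.e.\ a map $f\colon (C, p_1, \dots, p_n) \to X$ with $C \cong \PP^1$ smooth and $f^{-1}(D) \subseteq \{p_j\}$. Then $f(C^\circ) \subseteq T$. So for every character $m \in M$, the pullback $f^*\chi^m$ is a rational function on $C$ with zeros and poles only at the markings. The contact order condition says precisely that
\[\operatorname{div}(f^*\chi^m) = \sum_j \langle m, v_j \rangle\, p_j .\]
Conversely, fix $(C, p_\bullet) \in \mathcal{M}_{0,n}$ and $t \in T$.
\begin{itemize}
\item By the balancing condition $\sum_j v_j = 0$, the divisor $\sum_j \langle m, v_j\rangle p_j$ has degree zero. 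On $\PP^1$ it is therefore the divisor of a rational function $g_m$, unique up to a scalar.
\item Since $v_i = 0$, the function $g_m$ has neither zero nor pole at $p_i$. We can therefore fix the scalar uniquely by requiring $g_m(p_i) = \chi^m(t)$.
\item Uniqueness forces $g_{m+m'} = g_m g_{m'}$. Hence $m \mapsto g_m$ is a homomorphism $M \to \mathcal{O}(C^\circ)^\times$, i.e.\ a morphism $C^\circ \to T$.
\item Since $C$ is a smooth curve and $X$ is proper, this morphism extends uniquely to $f\colon C \to X$.
\item Near $p_j$, the orders of vanishing $\langle m, v_j\rangle$ show that $f(p_j)$ lies in the orbit of the cone containing $v_j$, with contact order exactly $\upalpha_{j\uprho}$ along each $D_\uprho$.
\end{itemize}
This gives a bijection on points. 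It is also compatible with automorphisms: the normalisation at $p_i$ kills the residual $T$-ambiguity, and $n \ge 3$ is needed anyway for $\mathcal{M}_{0,n}$ to make sense.

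\textbf{Step 2: in families.} Next I would run the same construction over an arbitrary base $S \to \mathcal{M}_{0,n} \times T$, so as to get a morphism inverse to $\mathrm{fgt}\times\ev_i$. Over $\mathcal{M}_{0,n}$ one can use explicit coordinates: normalise $p_1, p_2, p_3$ to $0, 1, \infty$ and regard the remaining $z_j$ as coordinates on $\mathcal{M}_{0,n}$. Then $g_m$ is given by the explicit formula
\[ g_m(z) = \chi^m(t)\prod_{j \neq \infty} \left(\frac{z - z_j}{z_i - z_j}\right)^{\langle m, v_j\rangle},\]
where the factor at $\infty$ is omitted. This formula is visibly algebraic in $(z_\bullet, t)$ and multiplicative in $m$. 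The fibrewise extension over the markings is then carried out toric-chart by toric-chart: near each section $p_j$ the exponents show that the map lands in the affine chart $U_{\upsigma}$ of the cone $\upsigma$ containing $v_j$. This yields a family of maps over $\mathcal{M}_{0,n} \times T$, and hence the desired inverse morphism.

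\textbf{Step 3: identify with log maps.} Finally, I must check that these families are exactly the log stable maps with trivial log structure on the base. The point is that for a smooth source curve whose image meets $D$ only at the markings, the log structure is determined by the divisorial log structures, and the contact orders are read off from the orders of vanishing computed above.

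The main obstacle I expect is Steps 2--3. The pointwise bijection is elementary, but showing that the inverse is genuinely a morphism, and that $\LogXDint$ is represented by these naive families, requires care with the log-geometric definitions. I would first try to reduce this to the statement that $\LogXDint$ is the open locus of ordinary relative maps with prescribed tangency, and then invoke the explicit formula above.
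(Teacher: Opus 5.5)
Your proof is correct in substance, but it takes a different (essentially dual) route from the paper's.

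The paper argues as in Example~\ref{interiorEG}, using Cox's homogeneous coordinates. A map $\PP^1\to X$ with $f^*D_\uprho=\sum_j\upalpha_{j\uprho}p_j$ is a tuple of sections $s_\uprho=\uplambda_\uprho\prod_j(XT_j-YS_j)^{\upalpha_{j\uprho}}$. The scalars $(\uplambda_\uprho)$ modulo the Cox group form a $T$-torsor, and evaluation at the marking with $v_i=0$ trivialises that torsor.

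You instead describe $C^\circ\to T$ as a homomorphism $M\to\calO(C^\circ)^\times$ and extend it by the valuative criterion. The two descriptions are related by $g_m=\prod_\uprho s_\uprho^{\langle m,v_\uprho\rangle}$. For $\PP^k$ with $\chi^m=A_j/A_0$, your $g_m$ is exactly the ratio $\uplambda_jf_j/(\uplambda_0f_0)$ used in Proposition~\ref{classbound}.

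What each route buys:
\begin{itemize}
\item Your version avoids Cox's description of maps into $X$ and the bookkeeping of the Cox group acting on the $\uplambda_\uprho$; the normalisation $g_m(p_i)=\chi^m(t)$ does this in one stroke. It also globalises cleanly over $\mathcal{M}_{0,n}\times T$.
\item The paper's version produces global homogeneous polynomials, which is what the later bidegree computation needs.
\item Beyond citing Ranganathan, the paper only gives the pointwise identification, so your Steps 2--3 already go further.
\end{itemize}

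To finish Step 3, write $\Psi$ for your inverse. What is still missing is $\Psi\circ(\mathrm{fgt}\times\ev_i)=\mathrm{id}$ over an arbitrary, possibly non-reduced, base $S$; your phrase ``uniqueness forces'' is only a pointwise statement. The log structure is exactly what makes this work:
\begin{itemize}
\item With trivial base log structure, the log structure on the universal curve is the divisorial one of the sections.
\item So the existence of $f^\flat$ forces $f^*\chi^m=u\cdot w_j^{\langle m,v_j\rangle}$ near the $j$-th section, with $u$ a unit and $w_j$ a local equation of that section.
\item Hence $f^*\chi^m/g_m$ is a unit on all of $\PP^1_S$. It therefore lies in $\calO(S)^\times$, and it equals $1$ by evaluating at $p_i$.
\item The complement of the sections is schematically dense and $X$ is separated, so $f$ is the pulled-back universal map.
\item Finally, $f^\flat$ is determined by $f$ because $w_j$ is a non-zero-divisor.
\end{itemize}

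Two small corrections. First, in your explicit formula, choose the rigidification so that $p_i$ is not sent to $\infty$ (the statement allows any $i$), and read the $j=i$ factor as omitted. Second, your remark on automorphisms conflates two things. The objects have no automorphisms because $n\geq 3$ markings rigidify $\PP^1$. The normalisation at $p_i$ instead removes the scalar ambiguity of $g_m$, which is not an automorphism.
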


The proof follows exactly as Example \ref{interiorEG}, appealing in 
general to the homogeneous coordinate ring on a smooth toric variety 
\cite{CoxHomog}.
As noted in \cite{Ranganathan_2017}, there is an abstract isomorphism 
even without the assumption that some $v_i = 0,$ but we rely on an 
explicit construction of the isomorphism. 

While virtual machinery is in general 
necessary to compute invariants, the case of genus zero maps to a toric
variety is simpler.

\begin{theorem} The space $\LogPk$ is irreducible of the expected
dimension. Moreover,
\[ [\LogPk]^{\operatorname{vir}} = [\LogPk].\]
\end{theorem}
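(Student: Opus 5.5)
The plan is to deduce both claims from the toric structure of genus zero log maps, using the tropical and deformation-theoretic picture of \cite{GS2013logarithmic, Ranganathan_2017}. The argument has three steps.

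\textbf{Step 1: log smoothness.} First I would show that $\LogPk$ is log smooth over the base of dimension $n-3+k$. The obstruction theory for log stable maps relative to $\mathrm{Spec}\,\CC$ with its trivial log structure is governed by $H^1(C, f^*T^{\log}_X)$. For a toric pair $(X,D)$, the log tangent bundle is trivial: $T^{\log}_X(-\log D) \cong N \otimes \calO_X$. Hence
\[ H^1(C, f^*T^{\log}_X) = N \otimes H^1(C, \calO_C) = 0 \]
on any genus zero curve, since $C$ is a nodal tree of rational curves. So the obstruction space vanishes identically. Therefore the moduli space is log smooth, hence of the expected dimension everywhere, and the virtual class equals the fundamental class. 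This gives $[\LogPk]^{\vir} = [\LogPk]$. Here I would cite the standard fact that unobstructed log deformation theory implies log smoothness over the stack of prestable log curves, which in turn is log smooth.

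\textbf{Step 2: dimension count.} The expected dimension is
\[ (k+1)\cdot 0 \text{ adjusted for log tangency} + \dim T + n - 3 = n-3+k, \]
since $\deg f^*T^{\log} = 0$ and $\operatorname{rk} = k$. This agrees with $\dim(\mathcal{M}_{0,n} \times T)$, consistent with Proposition \ref{intExplicit}.

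\textbf{Step 3: irreducibility.} A log smooth space over $\CC$ has toroidal singularities. Its locus of trivial log structure, $\LogPkint$, is therefore dense in each connected component, because points with nontrivial log structure lie over toroidal boundary strata of positive codimension. Proposition \ref{intExplicit} gives $\LogPkint \cong \mathcal{M}_{0,n} \times T$, which is irreducible. It remains to check that $\LogPk$ is connected, or equivalently that every boundary point lies in the closure of the interior. Log smoothness gives exactly this: every point has a toric local model in which the interior is dense. Hence $\LogPk$ is the closure of an irreducible open set, so it is irreducible.

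The case where no $v_i=0$ can be handled by adding a marking with zero contact and forgetting it. Alternatively, it follows from the abstract isomorphism noted after Proposition \ref{intExplicit}.

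The main obstacle is Step 1. Making the equality of virtual and fundamental classes rigorous requires identifying the Gross--Siebert / Abramovich--Chen obstruction theory with $f^*T^{\log}$ relative to the log smooth Artin stack of log curves, and checking that this base stack is itself log smooth, hence irreducible and of pure dimension. Once that is in place, density of the interior and purity of dimension follow formally.
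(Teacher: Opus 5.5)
Your argument for $[\LogPk]^{\vir}=[\LogPk]$ is the same as the paper's. Because $T^{\log}_{\PP^k|\partial\PP^k}\cong\calO^{\oplus k}$, the relative obstruction theory has no $H^1$ in genus zero. So $\LogPk$ is smooth over the base stack $\mathfrak{M}$ (the paper takes Abramovich--Wise's stack of maps to the Artin fan), and the virtual class is the fundamental class.

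Where you genuinely differ is irreducibility. The paper simply cites Ranganathan's Proposition 3.3.5. You instead extract irreducibility from the same vanishing:
\begin{enumerate}
\item unobstructedness over a log smooth base, via a strict morphism, makes $\LogPk$ log smooth over a point, hence toroidal;
\item so the trivial-log-structure locus $\LogPkint$ is dense;
\item and $\LogPkint$ is irreducible because it is isomorphic to $\mathcal{M}_{0,n}\times T$ by Proposition \ref{intExplicit}.
\end{enumerate}
Your route is self-contained and shows that both halves of the theorem come from a single vanishing statement. The price is exactly the extra input you flag in your last paragraph: the base stack must be log smooth and the map $\LogPk\to\mathfrak{M}$ must be strict. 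This is what upgrades ``smooth over $\mathfrak{M}$'' (all the paper needs for the class statement) to ``log smooth over $\CC$'' (what you need for density of the interior). The paper's citation avoids that bookkeeping.

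Two cosmetic corrections:
\begin{itemize}
\item Connectedness is not a separate step, and it is not in general equivalent to every boundary point lying in the closure of the interior. What you need is density of $\LogPkint$ in all of $\LogPk$, which log smoothness gives at every point, together with irreducibility of $\LogPkint$; that suffices.
\item The expression in Step 2 is garbled. The clean count is $\chi(C,f^*T^{\log})+\dim\mathfrak{M}=k+(n-3)$.
\end{itemize}
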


\begin{proof}
    The irreducibility of $\LogPk$ is \cite[Proposition 3.3.5]
    {Ranganathan_2017}.
    A relative perfect obstruction theory for $\LogPk \to 
    \mathfrak{M},$ where $\mathfrak{M} = \overline{\mathcal{M}}_{0,\alpha}(\mathcal{A}_{\PP^k \vert \partial \PP^k})$ is the space of 
    maps to the Artin fan of $\PP^k,$ is 
    constructed in \cite{AbramovichWise} as follows.
    Denote by $\pi: \mathcal{C} \to \LogPk$ the universal curve,
    and $f: \mathcal{C} \to \PP^k$ the universal map. Then
    \[E^{\bullet} = \mathrm{R}\pi_* f^* T^{\log}_{\PP^k \vert 
    \partial \PP^k} \to \mathbb{L}_{\LogPk/\mathfrak{M}}\]
    is a relative perfect obstruction theory.
    But $T^{\log}_{\PP^k \vert \partial \PP^k} \cong \calO_{\PP^k}^{\oplus k}
    ,$ so in genus zero there are no obstructions. Therefore 
    $\LogPk$ is smooth over $\mathfrak{M},$ and
    $[\LogPk]^{\vir} = [\LogPk].$ 
\end{proof}

Given cohomology classes $\upgamma_1, \dots, \upgamma_n \in 
H^*(\PP^k),$ one defines logarithmic Gromov--Witten invariants 
\[\langle \upgamma_1, \dots, \upgamma_n \rangle^{\PP^k \vert
\partial \PP^k}_{0, \alpha} \coloneq \int_{[\LogPk]} \prod_i
 \ev_i^* \upgamma_i.\]

\subsection{Positivity} \label{positivitySec}

The other key ingredient we need is the following theorem on positivity 
of intersections.

\begin{theorem}{(\cite[Corollary 12.2]{FultonIntersection})} 
\label{positivityThm}
    Let $X$ be a nonsingular variety with globally generated tangent 
    bundle, and let $V_1, \dots, V_k$ be subvarieties with $\sum 
    \mathrm{codim}(V_i/X) = \dim X.$ Let $Z_i$ be the 
    distinguished varieties of the scheme-theoretic
    intersection $\bigcap V_i$, and $\upzeta_i$ be the component of 
    $\prod_j [V_j]$ in $A_0(Z_i).$ Then $\deg \upzeta_i \geq 0$ for 
    all $i$.
\end{theorem}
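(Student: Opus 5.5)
Since Theorem \ref{positivityThm} is quoted from \cite{FultonIntersection}, we only recall how its proof goes. The plan is to write the product $\prod_j [V_j]$ as a refined Gysin pullback along the diagonal. We then use excess intersection theory to localise the product onto the distinguished varieties, and finally show that each local contribution is the degree of a cone intersected with a zero section. Global generation lets us move that zero section, which forces the degree to be nonnegative.

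\textbf{Step 1: reduce to the diagonal.} Let $\Delta: X \to X^k$ be the diagonal. Then $\prod_j [V_j] = \Delta^!\,[V_1 \times \cdots \times V_k]$. Its scheme-theoretic preimage is $W = \bigcap V_i$, and the normal bundle is $N = N_\Delta X^k \cong T_X^{\oplus (k-1)}$. Since $T_X$ is globally generated, so is $N$, and hence so is its restriction $N|_W$. The codimension hypothesis says that $\operatorname{rank} N = \dim(V_1\times\cdots\times V_k)$, so the product lands in $A_0(W)$.

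\textbf{Step 2: decompose by the normal cone.} Let $C = C_W(V_1\times\cdots\times V_k) \subset N|_W$ be the normal cone, which is pure of dimension $\operatorname{rank} N$. Write $[C] = \sum m_i [C_i]$, where $C_i$ are its irreducible components and $Z_i$ is the support of $C_i$; these $Z_i$ are the distinguished varieties. By Fulton's construction the product equals $0^!_N [C]$, where $0^!_N$ is intersection with the zero section. Hence $\upzeta_i = m_i\, 0^!_{N|_{Z_i}} [C_i]$, and it suffices to prove the following claim: if $E$ is a globally generated vector bundle on a variety $Z$ and $C' \subset E$ is an irreducible cone of dimension $\operatorname{rank} E$, then $\deg 0^!_E [C'] \geq 0$.

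\textbf{Step 3: the positivity claim, which is the main obstacle.} We compactify: consider $P(C'\oplus 1) \subset P(E \oplus 1)$ over $Z$. The class $0^!_E[C']$ is the pushforward of $c_{\operatorname{top}}(Q) \cap [P(C'\oplus 1)]$, where $Q$ is the universal quotient bundle on $P(E\oplus 1)$. Global generation of $E$ makes $Q$ globally generated, since $E\oplus 1$ is. Therefore $c_{\operatorname{top}}(Q)$ is represented by the zero scheme of a general section of $Q$. Because the space of sections generates $Q$ at every point, a Kleiman-style dimension count shows that this zero scheme meets the irreducible variety $P(C'\oplus 1)$ properly, in finitely many points, or not at all. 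The degree is then the sum of positive intersection multiplicities, and in particular is $\geq 0$.

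The delicate point is verifying the properness and genericity in the moving argument. Fulton handles this through the fact that Chern classes of globally generated bundles, capped against any effective cycle, are represented by effective cycles, by degeneracy loci of general sections \cite[Example 12.1.7]{FultonIntersection}. We would invoke that statement directly rather than reprove it. This yields $\deg \upzeta_i \geq 0$ for all $i$.
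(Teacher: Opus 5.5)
Your proposal is correct and is essentially Fulton's own proof, which the paper cites rather than reproduces. You reduce via the diagonal to $0^!_N[C]$ with $N\cong T_X^{\oplus(k-1)}$ globally generated, split along the components $C_i$ of the normal cone, and then get nonnegativity of $q_*\bigl(c_{e}(Q)\cap[P(C_i\oplus 1)]\bigr)$ from global generation of the universal quotient $Q$ together with positivity of multiplicities in proper intersections; this is Fulton's positivity theorem for cones, of which Corollary 12.2 is a consequence. One point is left implicit, both in your write-up and in the statement: your argument shows each $\upzeta_i$ is represented by a nonnegative $0$-cycle, and speaking of $\deg\upzeta_i$ also needs the $Z_i$ to be complete, which is automatic in the paper's application to $X=\PP^{n-3}\times\PP^k$.
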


While transverse intersections are positive in complex geometry, due to 
canonical orientations, negativity can be introduced from excess 
intersections. Intuitively, this arises from negativity of a 
normal bundle. But all such arise as quotients of the ambient
tangent bundle, so a positivity assumption on the tangent 
bundle provides a uniform way of obstructing
negative contributions. 

In our applications $X$ will be a product of projective spaces,
which satisfies the hypothesis because of the Euler sequence.

\section[Proof of Theorem 1.1]{Proof of Theorem \ref{Main}}

We consider genus zero logarithmic maps to $\PP^k$ relative to its 
toric boundary, and insist that at least one marked point is mapped into 
the torus $T \subset \PP^k$. Throughout this text, 
we will fix a number $n$ of marked points, labelled $\{ p_1, 
\dots, p_n \}.$ Fix also a labelling $H_0, \dots, 
H_k$ of the boundary components, and tangency data $\upalpha_{ij}$ 
between the marked point $p_i$ and the boundary component $H_j.$
We assume that at least one marked point
is not tangent to any component of $\partial \PP^k.$ Any map 
with these tangency data is necessarily of degree $d = \sum_i
\upalpha_{ij}$ for any $j$.

Note that we can use multilinearity of Gromov--Witten invariants to 
restrict to linear insertions. It is therefore more convenient to 
work with the following definition. 

\begin{definition}
    Fix a partition $\upnu = (\upnu_1, \dots, \upnu_n)$ of $n+k-3$. Define 
    \[N_{\upalpha, \upnu} \coloneq \langle H^{\upnu_1}, \dots, H^{\upnu_n} 
    \rangle^{\PP^k \vert \partial \PP^k}_{0, \upalpha},\]
    where $H = c_1(\mathcal{O}_{\PP^k}(1))$ is the hyperplane class.
\end{definition}

\subsection{Transversality} \label{transversality}

In Sections \ref{transversality} and \ref{AltComp} we establish the theoretical underpinnings of the upper
bound, before moving onto explicit calculations in Section 
\ref{calculations}. We express $N_{\upalpha, \upnu}$ as a transverse intersection in $\LogPkint.$

\begin{proposition} \label{invIsPtCount}
    Fix tangency orders $\upalpha$ and insertions $\upnu.$
    Then choosing generic linear subspaces 
    $V_i \cong \PP^{\upnu_i}$ in $\PP^k,$ the scheme
    \[ \bigcap_{i=1}^n \ev_i^{-1} V_i \]
    consists of $N_{\upalpha, \upnu}$ many reduced points contained 
    in $\LogPkint.$
\end{proposition}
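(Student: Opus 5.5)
The plan is to use the homogeneity of $\PP^k$ under $\mathrm{PGL}_{k+1}$ and Kleiman's transversality theorem. We cannot apply it on all of $\PP^k$ at once, because the toric boundary is not preserved by $\mathrm{PGL}_{k+1}$. So I would first stratify $\LogPk$ by the logarithmic strata: by Ranganathan, $\LogPk$ is irreducible of dimension $n-3+k$, and it carries a stratification by locally closed strata $S_\sigma$ indexed by combinatorial types (tropical types). The open stratum is $\LogPkint$, and every boundary stratum has dimension strictly less than $n-3+k$. The evaluation maps $\ev_i$ restrict to morphisms $S_\sigma \to \PP^k$. Since $\sum_i \mathrm{codim}\, V_i = \sum_i (k-\upnu_i)$ equals $\dim \LogPk$ once the insertions have total codimension $n-3+k$, I would show that generic translates of the $V_i$ miss every boundary stratum. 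For this I would use the product map $\prod_i \ev_i : S_\sigma \to (\PP^k)^n$ and the action of $G = \mathrm{PGL}_{k+1}^n$ on $(\PP^k)^n$, which is transitive. Kleiman's theorem, in characteristic zero, then says that for generic $g \in G$ the preimage of $g\cdot(V_1\times\cdots\times V_n)$ in $S_\sigma$ is either empty or of dimension $\dim S_\sigma - \sum_i \mathrm{codim}\, V_i < 0$, hence empty. It is also smooth of the expected dimension wherever it is nonempty. There are finitely many strata, so a single generic choice works for all of them at once.

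Applied to the open stratum $\LogPkint$, which is smooth (it is isomorphic to $\mathcal{M}_{0,n}\times T$ by Proposition~\ref{intExplicit}), Kleiman's theorem shows that $\bigcap_i \ev_i^{-1}V_i$ is a smooth zero-dimensional scheme, i.e.\ finitely many reduced points, all contained in $\LogPkint$. Because the whole intersection lies in the smooth open locus and is transverse there, its scheme-theoretic length equals the intersection number $\int_{[\LogPk]}\prod_i \ev_i^*[V_i]$. The preceding theorem identifies this fundamental class with the virtual class, and $[V_i] = H^{k-\upnu_i}$, so the count equals $N_{\upalpha,\upnu}$ with the codimension convention matched to the definition. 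To compare the number of points with the degree of the cup product, I would note that $\LogPk$ is proper and that each $\ev_i^* H^{c}$ is represented by the refined Gysin pullback of $V_i$. The refined intersection is then supported on the set-theoretic intersection, which we have shown is transverse, so it equals the fundamental class of the reduced points.

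The main obstacle I expect is the dimension claim for the boundary strata, together with the properness needed to apply the Gysin argument. Since $\LogPk$ is a log-smooth, irreducible Deligne–Mumford stack, its complement of the trivial-log-structure locus is a divisor, and I would cite this to get that each boundary stratum has dimension at most $n-4+k$. A subtlety is that Kleiman's theorem is usually stated for varieties. I would apply it to an étale atlas or coarse moduli space of each stratum, or to its reduced underlying variety, because only dimension counts are needed on the boundary strata.
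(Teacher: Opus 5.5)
Your proposal is correct and follows essentially the same route as the paper. Both arguments apply Kleiman's theorem to the transitive action of $\prod_i \operatorname{PGL}_{k+1}$ on $\prod_i \PP^k$, use a dimension count to show generic translates miss the lower-dimensional boundary, use smoothness of the interior to get reduced points, and then identify the count with the invariant (the paper cites Fulton's Proposition 8.2(b) where you argue via the refined Gysin map); your stratum-by-stratum treatment and your care about stacks and the codimension convention are simply a more detailed version of the paper's single Cartesian diagram with $\partial \LogPk$.
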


\begin{proof}
This follows from Kleiman's theorem \cite{Kleiman}. Indeed, 
$\prod_i \operatorname{PGL}_{k+1}$ acts transitively on $\prod_i
\PP^k.$ Having chosen $H_i$ generically, define $A$ and $B$ 
via the following Cartesian diagram
\[\begin{tikzcd}
	A & B & {\prod_iV_i} \\
	{\partial \LogPk} & \LogPk & {\prod_i \PP^k}
	\arrow[from=1-1, to=1-2]
	\arrow[from=1-1, to=2-1]
	\arrow[from=1-2, to=1-3]
	\arrow[from=1-2, to=2-2]
	\arrow[from=1-3, to=2-3]
	\arrow[from=2-1, to=2-2]
    \arrow[from=1-1, to=2-2, phantom, "\square" xshift={17pt}]
    \arrow[from=1-2, to=2-3, phantom, "\square" xshift={7pt}]
	\arrow["{\prod_i \ev_i}", from=2-2, to=2-3]
\end{tikzcd}\]
We deduce that $B = \bigcap_{i=1}^n \ev_i^{-1} V_i$ consists of 
finitely many reduced points, and further that $A = B \cap 
\partial \LogPk$ is empty. So all points of $B$ 
are contained in the interior $\LogPkint$ of the 
moduli space. That there are $N_{\upalpha, \nu}$ many points 
follows from \cite[Proposition 8.2(b)]{FultonIntersection}, 
noting that the smoothness hypotheses are satisfied.
\end{proof}

\subsection{Alternative Compactification} \label{AltComp}

We know from Proposition \ref{intExplicit} that a choice of 
marked point with no tangency to $\partial \PP^k$ gives an 
explicit isomorphism $\LogPkint \cong 
\mathcal{M}_{0,n} \times T.$ Moreover, a choice of any three 
marked points rigidifies the action of $\operatorname{PGL}_2$
on $\PP^1$ and gives an isomorphism between
$\mathcal{M}_{0,n}$ and an open subset of $\A^{n-3}.$ We make
these choices now. 

Fix a labelling $(p_1, \dots, p_n)$ of the marked points, and send $(p_1,p_2,p_3) \mapsto (0,1,\infty).$ We will also 
denote by \[p = p_{i_0} \in \{p_4, \dots, p_n \} \] a fixed choice of marked point with no tangency to the 
boundary. In homogeneous coordinates on $\PP^1,$ write $p_i = [S_i: T_i].$
Our rigidification of the $\operatorname{PGL}_2$ action was achieved by declaring $p_1 = [0:1], p_2 = [1:1], p_3 = [1:0].$ 
We obtain the following affine coordinates on $\mathcal{M}_{0,n} \subset \A^{n-3}$:  
\[x_4 = \frac{S_4}{T_4}, \dots, x_n = \frac{S_n}{T_n}.\]
Compactifying to $\PP^{n-3}$ gives homogeneous coordinates $X_0, X_4, \dots, X_n$ with 
\begin{equation}\label{eq1}
\frac{X_i}{X_0} = x_i = \frac{S_i}{T_i}.\end{equation} 

Also naturally $T \subset \A^{k},$ say with coordinates $a_i = A_i/A_0,$ where $A_0, \dots, A_k$ are homogeneous 
coordinates on $\PP^k.$ Now $\LogPkint$ is isomorphic to an open subset of $\A^{n-3} \times \A^k,$ or equivalently an open 
subset of $\PP^{n-3} \times \PP^k.$

Section \ref{transversality} relates $\bigcap_i \ev_i^{-1} V_i,$ for generic linear subspaces
$V_i$ of $\PP^k,$ to logarithmic Gromov--Witten invariants. The positivity considerations of Section 
\ref{positivitySec} lead us to consider the intersection of the closures $\bigcap_i \overline{ \ev_i^{-1} V_i}$ in $\PP^{n-3}
\times \PP^k.$

\begin{proposition}\label{mainTheory}
    Denoting by $H_{ij},$ for $i = 1, \dots, n,$ 
    $j = 1, \dots, \upnu_i,$ generic hyperplanes in $\PP^k,$ 
    we have an upper bound
    \[ N_{\upalpha, \upnu} \leq \deg \prod_{i=1}^n \prod_{j=1}
    ^{\upnu_i}
    [\overline{\ev_i^{-1}H_{ij}}].\]
\end{proposition}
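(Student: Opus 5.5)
The strategy is to apply Theorem \ref{positivityThm} on $Y = \PP^{n-3}\times\PP^k$, which has globally generated tangent bundle via the Euler sequence on each factor. First, we realise $N_{\upalpha,\upnu}$ as a transverse count inside the open set $U = \LogPkint \subset Y$.

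\textbf{Step 1 (the interior intersection).} Choose generic hyperplanes $H_{ij}\subset\PP^k$ and set $V_i = \bigcap_j H_{ij}$, a generic linear subspace of codimension $\upnu_i$. Then $\ev_i^{-1}V_i = \bigcap_j \ev_i^{-1}H_{ij}$. Apply Proposition \ref{invIsPtCount}, or rerun its Kleiman argument with the $\sum_i \upnu_i$ hyperplanes moved independently by $\prod \operatorname{PGL}_{k+1}$. This shows that the scheme $\bigcap_{i,j}\ev_i^{-1}H_{ij}$ in $\LogPk$ consists of exactly $N_{\upalpha,\upnu}$ reduced points, all lying in $U$. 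Because these are reduced isolated points of a transverse intersection, the tangent spaces of the divisors $\ev_i^{-1}H_{ij}$ meet transversally there.

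\textbf{Step 2 (closures in $Y$).} Let $D_{ij} = \overline{\ev_i^{-1}H_{ij}}\subset Y$, the closure of the divisor in $U$. These are $\sum_i\upnu_i = n+k-3 = \dim Y$ effective Cartier divisors, so the codimension hypothesis of Theorem \ref{positivityThm} holds. We may take the $D_{ij}$ irreducible, or argue componentwise, since each $\ev_i^{-1}H_{ij}$ is the pullback of a hyperplane under a dominant map, and in any case the theorem extends additively to effective cycles. The scheme-theoretic intersection $W = \bigcap_{i,j} D_{ij}$ satisfies $W\cap U = \bigcap \ev_i^{-1}H_{ij}$, because $U$ is open and closure commutes with restriction to $U$. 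Hence $W$ is the disjoint union of the $N_{\upalpha,\upnu}$ reduced points $q_1,\dots,q_N$ in $U$ together with a closed set $W_\partial\subset Y\setminus U$.

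Each $q_m$ is an isolated, reduced, transverse point of $W$, so it is a distinguished variety, and the part of the refined intersection product $\prod[D_{ij}]$ supported there has degree $1$ by \cite[Proposition 7.1]{FultonIntersection}, since the intersection multiplicity is $1$ at a transverse point. The remaining distinguished varieties lie in $W_\partial$. By Theorem \ref{positivityThm}, their contributions $\upzeta$ all have $\deg\upzeta\ge 0$. Summing over all distinguished varieties, \cite[Proposition 6.1]{FultonIntersection} shows the total is $\deg\prod_{i,j}[D_{ij}]$. Therefore
\[\deg \prod_{i,j}[D_{ij}] = N_{\upalpha,\upnu} + \sum_{Z\subset W_\partial}\deg\upzeta_Z \;\ge\; N_{\upalpha,\upnu}.\]

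\textbf{Main obstacle.} The delicate point is the bookkeeping that isolates the interior points as their own distinguished varieties with contribution exactly $1$. We need the interior points to be connected components of $W$ and not embedded in some larger distinguished variety meeting the boundary. This follows because the $q_m$ are isolated points of $W$ contained in the open set $U$, and $W\cap U$ is finite. The contribution of a connected component can be computed on any open neighbourhood, namely $U$, where transversality from Step 1 gives multiplicity one. Irreducibility or reducedness of the $D_{ij}$ is only needed to apply Theorem \ref{positivityThm} as stated. I would handle this by writing each $D_{ij}$ as a sum of its prime components, with multiplicity $1$ since it is generically reduced, and using multilinearity of the refined product.
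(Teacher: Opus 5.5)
Your proposal is correct and follows the same route as the paper: you split the refined product $\prod_{i,j}[\overline{\ev_i^{-1}H_{ij}}]$ on $\PP^{n-3}\times\PP^k$ into the $N_{\upalpha,\upnu}$ transverse interior points supplied by Proposition~\ref{invIsPtCount}, plus the distinguished varieties in the boundary, whose contributions are nonnegative by Theorem~\ref{positivityThm}. Your extra bookkeeping fills in details the paper's short proof leaves implicit: you show the interior points are connected components of the intersection with multiplicity one, and you decompose the closures into prime components so the subvariety hypothesis of the positivity theorem is met.
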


\begin{proof}
    Consider the intersection $\bigcap \overline{\ev_i^{-1}V_i}$
    in $\PP^{n-3} \times \PP^k,$ with $V_i \cong \PP^{\nu_i}$
    generic linear subspaces of $\PP^k.$
    Denote by $Z_i$ the points of the intersection
    contained in $\LogPkint,$ and by $W_i$ the distinguished
    varieties
    contained in the boundary $(\PP^{n-3} \times \PP^k)
    \setminus \LogPkint.$
    Write \[ \prod_{i=1}^n \prod_{j=1}
    ^{\upnu_i}
    [\overline{\ev_i^{-1}H_{ij}}] = z + w \in 
    \bigoplus A_0(Z_i) \oplus A_0(\cup_i W_i).\]
    Theorem \ref{positivityThm} tells us that $\deg w \geq 0,$
    while $\deg z = N_{\upalpha, \upnu}$ by Proposition 
    \ref{invIsPtCount}.
\end{proof}

\begin{remark}
    From this result, we see that the intersections we 
    perform on $\LogPk$ can be arranged to avoid the boundary. 
    The fact that the alternative compactification 
    $\LogPkint  \subset \PP^{n-3} \times \PP^k$ gives us 
    an \textit{upper bound} and not an exact calculation of the 
    invariants tells us that here, intersections 
    \textit{cannot} be made to avoid the boundary. The reason 
    for this is that the evaluation map $\PP^{n-3} \times 
    \PP^k \mathrel{-\,}\rightarrow \prod_i \PP^k$ is only a rational map. 
    Intersections can therefore be made to avoid the boundary 
    of a blowup, but this will in general not have the 
    positivity we need.
\end{remark}

\subsection{Calculations}\label{calculations}

We turn to direct computations. The reader may benefit from 
bearing in mind that, since we want to find an upper bound
for the degree of a divisor in a product of projective 
spaces, it is enough to find any polynomial vanishing 
along the divisor and read off its bidegree. 
In the remainder of this Section, $H$ will denote a generic 
hyperplane in $\PP^k$ and we will write 
\[A_{n+k-4}(\PP^{n-3} \times \PP^k) = \Z H_1 \oplus \Z H_2.\]
We first deal with the easy case of evaluation at $p=p_{i_0}.$

\begin{lemma} \label{easyEv}
    We have 
    $[\overline{\ev_{i_0}^{-1} H} ] = H_2.$
\end{lemma}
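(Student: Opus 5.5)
The plan is to identify $\ev_{i_0}$ explicitly on the interior and then take closures. By Proposition \ref{intExplicit}, with the marked point $p = p_{i_0}$ having no tangency to the boundary, the isomorphism $\LogPkint \cong \mathcal{M}_{0,n} \times T$ is exactly $\mathrm{fgt} \times \ev_{i_0}$. Under our identifications $\mathcal{M}_{0,n} \subset \A^{n-3} \subset \PP^{n-3}$ and $T \subset \A^k \subset \PP^k$, the map $\ev_{i_0}$ is therefore the restriction to $\LogPkint$ of the second projection $\mathrm{pr}_2 : \PP^{n-3} \times \PP^k \to \PP^k$. The first step is simply to record this, noting that the coordinates $a_1, \dots, a_k$ on $T$ were chosen to be the affine coordinates $A_j/A_0$ of $\ev_{i_0}$ itself.

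It follows that $\ev_{i_0}^{-1} H = \mathrm{pr}_2^{-1}(H) \cap \LogPkint = (\PP^{n-3} \times H) \cap \LogPkint$. The second step is to take the closure in $\PP^{n-3} \times \PP^k$. Since $H$ is a generic hyperplane, it meets the torus $T$; moreover $\PP^{n-3} \times H$ is irreducible, and $\LogPkint$ is a dense open subset of $\PP^{n-3} \times \PP^k$. Hence $(\PP^{n-3} \times H) \cap \LogPkint$ is a nonempty open, and therefore dense, subset of the irreducible divisor $\PP^{n-3} \times H$. Its closure is all of $\PP^{n-3} \times H$, reduced and with multiplicity one.

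Finally, $[\PP^{n-3} \times H] = \mathrm{pr}_2^* [H]$, which is the generator $H_2$ of $A_{n+k-4}(\PP^{n-3} \times \PP^k)$. Equivalently, $\PP^{n-3} \times H$ is cut out by a single linear form in $A_0, \dots, A_k$, so its bidegree is $(0,1)$.

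There is essentially no obstacle here. The only point needing care is confirming that the embedding $\LogPkint \hookrightarrow \PP^{n-3} \times \PP^k$ really uses $\ev_{i_0}$ as its second factor, rather than some other coordinate on $T$. This is true by construction, because we chose $p_{i_0}$ as the untangent point in Proposition \ref{intExplicit}.
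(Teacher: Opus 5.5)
Your proposal is correct and follows the paper's own proof: the paper just observes that under the isomorphism $\mathrm{fgt} \times \ev_{i_0}$ the map $\ev_{i_0}$ becomes projection onto the second factor. Your closure argument ($M_{0,n} \times (H \cap T)$ is a nonempty open subset of the irreducible divisor $\PP^{n-3} \times H$) spells out the step the paper leaves implicit.
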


\begin{proof}
    Under our chosen isomorphism $\operatorname{fgt} 
    \times \ev_{i_0} : \LogPkint \to \mathcal{M}_{0,n} 
    \times T,$ the map $\ev_{i_0}$ is identified with 
    projection onto the second factor.
\end{proof}

\begin{proposition} \label{classbound}
    For $i \neq i_0$, one has $[\overline{\ev_i^{-1}H}] = \upchi H_1
    + H_2$ with 
    \[\upchi \leq \sum_{i=1}^n \upalpha^{\max}_i,\] where 
    $\upalpha^{\max}_i = \max_j \upalpha_{ij}.$
\end{proposition}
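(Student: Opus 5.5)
We write down $\ev_i$ explicitly in the coordinates of Section \ref{AltComp}, clear denominators to get one bihomogeneous polynomial cutting out $\overline{\ev_i^{-1}H}$, and read off its bidegree.

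\textbf{Step 1: the explicit map.} Following Example \ref{interiorEG} and Proposition \ref{intExplicit}, a point of $\LogPkint$ is a map $f:\PP^1 \to \PP^k$ of the form
\[ [S:T] \mapsto \Bigl[\uplambda_0 \prod_{l} (T_l S - S_l T)^{\upalpha_{l0}} : \cdots : \uplambda_k \prod_{l} (T_l S - S_l T)^{\upalpha_{lk}}\Bigr], \qquad \uplambda_j \in \CC^\times, \]
up to overall scaling. Here $p_3 = \infty = [1:0]$ contributes the factor $(-T)^{\upalpha_{3j}}$. We normalise $\uplambda$ by evaluating at $p=p_{i_0}$. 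Put
\[ F_j(S,T) = \prod_l (T_lS - S_lT)^{\upalpha_{lj}}, \qquad a_j = \frac{A_j}{A_0} = \frac{\uplambda_j F_j(p)}{\uplambda_0 F_0(p)}. \]
This gives
\[ \ev_i = \Bigl[A_0 \frac{F_0(p_i)}{F_0(p)} : \cdots : A_k \frac{F_k(p_i)}{F_k(p)}\Bigr]. \]

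\textbf{Step 2: the divisor.} A generic hyperplane $H = \{\sum c_j A_j = 0\}$ pulls back to
\[ \sum_j c_j A_j \, \frac{F_j(p_i)}{F_j(p)} = 0. \]
Each ratio $F_j(p_i)/F_j(p)$ is a rational function in the affine coordinates $x_4,\dots,x_n$. Indeed, $T_lS_i - S_lT_i$ becomes $x_i - x_l$ after dividing by $T_lT_i$, with the conventions $x_1=0$, $x_2=1$, and factors involving $p_3$ becoming constants. We multiply through by a common denominator $D = \prod_j F_j(p)'$, where $F_j(p)'$ is the dehomogenised product $\prod_l (x_{i_0}-x_l)^{\upalpha_{lj}}$ over $l\neq 3$. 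This gives a polynomial
\[ P = \sum_j c_j A_j \, F_j(p_i)' \prod_{j'\neq j} F_{j'}(p)', \]
linear in the $A$'s, and in the $x$'s of degree at most roughly $\sum_l \upalpha_{lj} + \sum_{j'\neq j}\sum_l \upalpha_{lj'}$. That estimate is too big: it is $(k+1)d$-ish, not $\sum_l \upalpha^{\max}_l$.

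\textbf{Step 3: the better denominator.} Instead we take as common denominator the least common multiple. Each factor $(x_{i_0}-x_l)$ appears in $F_j(p)'$ to power $\upalpha_{lj}$, so the lcm over $j$ is
\[ L = \prod_l (x_{i_0}-x_l)^{\upalpha_l^{\max}}. \]
Then
\[ P = \sum_j c_j A_j \, F_j(p_i)' \, \frac{L}{F_j(p)'} \]
is a polynomial. Its $j$-th term has degree in the $x$'s at most
\[ \sum_l \upalpha_{lj} + \sum_l(\upalpha^{\max}_l - \upalpha_{lj}) = \sum_l \upalpha^{\max}_l, \]
with factors for $l=3$ or constants contributing degree zero. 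Homogenising in $X_0,X_4,\dots,X_n$ to degree $\sum_l\upalpha^{\max}_l$ gives a bihomogeneous polynomial of bidegree $(\upchi', 1)$ with $\upchi' \le \sum_l \upalpha_l^{\max}$. It vanishes on $\overline{\ev_i^{-1}H}$.

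\textbf{Step 4: comparing classes, the main obstacle.} The divisor of $P$ contains $\overline{\ev_i^{-1}H}$, possibly together with extra components. Since $P$ is linear in $A$ with generic coefficients $c_j$, any extra component must lie in $\PP^{n-3}\times\PP^k$ over a locus where all coefficients vanish, i.e. it is a pullback $E\times\PP^k$ of a divisor from $\PP^{n-3}$. Such components have class a nonnegative multiple of $H_1$. Hence
\[ [\overline{\ev_i^{-1}H}] = [\operatorname{div} P] - (\text{effective multiple of } H_1), \]
so the coefficient of $H_2$ is $1$ and $\upchi \le \upchi' \le \sum_l \upalpha^{\max}_l$.

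The point needing most care is verifying that $\operatorname{div} P$ really contains $\overline{\ev_i^{-1}H}$ with multiplicity one and no extra $H_2$-type components. This follows because over the interior $P$ is a unit times the defining equation of $\ev_i^{-1}H$, which is reduced by genericity of $H$ (Proposition \ref{invIsPtCount}).
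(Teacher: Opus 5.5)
Your proposal is correct and is essentially the paper's argument: you make $\ev_i$ explicit by normalising at $p_{i_0}$, clear denominators, and read off the bidegree. Your least common multiple $L$ is exactly what the paper reaches by clearing with $\prod_{l} f_l(p)$ and then cancelling the common factor $\prod_v (x_{i_0}-x_v)^{\sum_l \upalpha_{vl}-\upalpha_v^{\max}}$. Your Step 4 spells out what the paper compresses into ``the equation defining $\overline{\ev_i^{-1}H}$ certainly divides $\tilde F_i$'', namely splitting off the factor of $P$ that involves only the $X$'s, using that $P$ has degree one in the $A_j$. Multiplicity one follows from this linearity, not from Proposition~\ref{invIsPtCount}. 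Also, a point the paper glosses over as well: the $H_2$-coefficient is $1$ only when at least two terms of $P$ survive, i.e.\ when $p_i$ is not forced to a torus-fixed point; otherwise $\overline{\ev_i^{-1}H}=\emptyset$, which is harmless for Theorem~\ref{Main}.
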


\begin{proof}
Suppose for now that $i \neq 1,2,3,$ and that $p_i$ has no 
tangency to $\partial \PP^k$. This way we are not 
evaluating at a marked point whose position in $\PP^1$ is 
fixed, and the evaluation map factors as
\[\ev_i: \mathcal{M}_{0,n} \times T \to T  \hookrightarrow 
\PP^k.\]
As discussed in Section \ref{logGWbackground}, any $f \in \LogPkint$ 
is determined up to units by
\begin{equation}\label{eq2} f_j \colon [X:Y] \mapsto
\prod_{i=1}^n (X \, T_i - Y \, S_i)^{\upalpha_{ij}}, \quad j = 0, \dots, k, \end{equation}
recalling that $[S_1:T_1], [S_2:T_2], [S_3:T_3]$ have fixed
values. In summary,
\[ f \colon [X:Y] \mapsto \big[\uplambda_0 f_0
\left([X:Y] \right) : \uplambda_1 f_1 \left( [X:Y] \right) : 
\cdots : \uplambda_k f_k \left([X:Y] \right) \big], \] 
for some $\uplambda_j \in \CC^{\times}.$
Because $p$ has zero contact with the boundary, the value of $f(p)$ and the tuple $(\uplambda_j)$ are equivalent data. 
Computing the other evaluation maps is achieved by passing back and forth between this equivalence. Precisely, a point 
$(\{ [S_i:T_i] \}_i, [A_0: \dots : A_k]) \in \mathcal{M}_{0,n} \times T$ has 
\[ [A_0 : \dots : A_k] = \left[\uplambda_0 f_0(p) : \uplambda_1 f_1(p) : \dots : \uplambda_k f_k(p) \right], \]
so \[ \frac{\uplambda_j}{\uplambda_0} = \frac{A_j}{A_0} \frac{f_0(p)}{f_j(p)}. \]
We deduce that $\ev_i : \mathcal{M}_{0,n} \times T \longrightarrow T$ sends
\[ \left( \{[S_i:T_i] \}_i, [A_0: \cdots A_k] \right) \mapsto \left[ 
f_0(p_i) : \frac{A_1}{A_0} \frac{f_0(p)}{f_1(p)} f_1(p_i):
\dots : \frac{A_k}{A_0} \frac{f_0(p)}{f_k(p)} f_k(p_i)\right]. \]
While the $T$ in the domain and codomain are the same torus, it
will help to give them distinct coordinates. On the target 
torus $T,$ let $B_0, \dots, B_k$
be the restriction of the homogeneous coordinates on $\PP^k.$
We can then pick a generic hyperplane
$H \subset T$ defined by \[ H = \left\{ \sum_{j=0}^k \upmu_j B_j = 0 \right\}. \]
We have
\[ \ev_i^{-1} H = \left \{ \sum_{j=0}^k \upmu_j A_j \frac{f_0(p)}
{f_j(p)} f_j(p_i) = 0 \right \}.\]
Clearing denominators expresses $\ev_i^{-1} H$ as
\[ \left\{ F_i(p_1,\dots,p_n) \coloneq \sum_{j=0}^k 
\left(\upmu_j A_j f_j(p_i) \prod_{l\neq j} f_l(p) 
\right) = 0 \right\}.\] 
From \eqref{eq2} and our choices of coordinates for $p_1, 
p_2, p_3,$ we see that 
\[f_j(p_i) = S_i^{\upalpha_{1j}} (S_i-T_i)^{\upalpha_{2j}} 
T_i^{\upalpha_{3j}} \prod_{u=4}^n 
(S_i T_u - T_i S_u)^{\upalpha_{uj}},\]
and 
\[f_l(p) = S_{i_0}^{\upalpha_{1l}} (S_{i_0} - 
T_{i_0})^{\upalpha_{2l}} T_{i_0}^{\upalpha_{3l}} \prod_{u=4}^n
(S_{i_0} T_u - T_{i_0} S_u)^{\upalpha_{ul}}.\]
In terms of the affine coordinates \eqref{eq1} on $\mathcal{M}_{0,n}
\times T$ we have: 
\[ \ev_i^{-1}H = \left\{ \sum_{j=0}^k \upmu_j a_j \left(\uptheta_{ij}
\prod_{u=4}^n (x_i - x_u)^{\upalpha_{uj}} \right)
\left( \upphi_{ij} \prod_{v=4}^n (x_{i_0} - x_v)^
{\sum_{l \neq j} \upalpha_{vl}} \right) = 0 \right\},\]
where
\[\uptheta_{ij} = x_i^{\upalpha_{1j}} 
(x_i-1)^{\upalpha_{2j}} (-1)^{\upalpha_{3j}}, \quad 
\upphi_{ij} = x_{i_0}^{\sum_{l \neq j} \upalpha_{1l}}
(x_{i_0} -1)^{\sum_{l \neq j} \upalpha_{2l}} 
(-1)^{\sum_{l \neq j} \upalpha_{3l}},\] and $a_0=1.$
Note that $F_i$ is \textit{not} the zero 
polynomial since $x_i$ and $x_{i_0}$ were assumed not to meet
the boundary $\partial \PP^k,$ so 
$\upalpha_{ij}, \upalpha_{i_0 j} = 0$ for all $j.$

The equation defining $\overline{\ev_i^{-1}H} \subset 
\PP^{n-3} \times \PP^k$ certainly divides
\[\tilde{F}_i = \sum_{j=0}^k \upmu_j A_j \left(\Uptheta_{ij}
\prod_{u=4}^n (X_i - X_u)^{\upalpha_{uj}} \right)
\left( \Upphi_{ij}
\prod_{v=4}^n (X_{i_0} - X_v)^{\sum_{l \neq j} \upalpha_{vl}} \right),\]
where
\[\Uptheta_{ij} = X_i^{\upalpha_{1j}}(X_i-X_0)^{\upalpha_{2j}}
(-X_0)^{\upalpha_{3j}}, \quad \Upphi_{ij} = X_{i_0}^
{\sum_{l \neq j} \upalpha_{1l}} (X_{i_0} - X_0)^
{\sum_{l \neq j} \upalpha_{2j}} (-X_0)^{\sum_{l \neq j} 
\upalpha_{3j}}.\]
The balancing condition shows that $\tilde{F}_i$ is of
bidegree $(dn,1).$ Indeed
\[\sum_{i=1}^n \upalpha_{ij} + \sum_{i=1}^n\sum_{l \neq j} \upalpha_{il}
= \sum_{i,j} \upalpha_{ij} = dn.\]
But $\tilde{F}_j$ is
reducible, with some factors cutting out parts 
of the boundary of $\LogPkint$ inside $\PP^{n-3} \times 
\PP^k.$ Our previous estimate of the bidegree of $[\overline{
\ev_i^{-1}H}],$ namely $(1, dn),$ can therefore be
improved upon. 

Since no marked point 
can be tangent to every component of the boundary we will not 
find common factors from the first bracketed factor. However, 
removing common factors from the second set of brackets
leaves a polynomial of degree 
\[dn - \sum_{i=1}^n \min_j \left( \sum_{l \neq j} \upalpha_{il} 
\right) = dn - \sum_{i=1}^n \left( \sum_{j=1}^k \upalpha_{ij} - 
\upalpha_i^{\max} \right) = \sum_i \upalpha_i^{\max}\]
in the $X_i$ as claimed. Note that the factors we cancel 
are all invertible on $\LogPkint$ where marked points 
cannot collide, so what remains still contains $\ev_i^{-1}H.$

We conclude by explaining how to do away with the 
simplifying assumptions made in the first line of the proof.
In the case of evaluating at $p_1, p_2, p_3,$ the 
$\uptheta_{ij}$ which were previously of positive degree 
become constant. The bound is less sharp in these cases,
but nevertheless true. Meanwhile, if $p_i$ has tangency 
to the boundary, then $\ev_i$ instead has image in the dense
torus of the stratum to which $p_i$ maps. This is a quotient
of the torus $T,$ so the same argument goes through with 
fewer terms in $F_i$. Again this introduces some 
inefficiency but the bound holds nonetheless.
\end{proof}

\begin{proof}[Proof of Theorem \ref{Main}.]

Lemma \ref{easyEv} and Propositions \ref{mainTheory} and 
\ref{classbound} combine to give \[ N_{\upalpha, \upnu} \leq \deg H_2^{\upnu_{i_0}} \prod_{i \neq i_0} \left(\left(\sum_i \upalpha_i^{\max}\right)H_1 + H_2 \right)^{\upnu_i}.\]
Extracting the coefficient of $H_1^{n-3} H_2^k$ in the above yields
\[N_{\upalpha, \upnu} \leq \binom{k-\upnu_{i_0}+n-3}{n-3} \left( \sum_i \upalpha_i^{\max} \right)^{n-3},\]
recalling that $\sum_i \upnu_i = k+n-3.$ Minimising over the possible
$i_0$ gives the result.
\end{proof}
\section{Examples}

We conclude with some comparisons between our upper bound 
and known invariants of $\PP^2$. The most salient feature is that the 
bound becomes less sharp as the number of marked points increases. All of
our exact calculations use Mikhalkin's tropical correspondence 
theorem \cite{mikhalkin2004enumerativetropicalalgebraicgeometry}.
We assume some familiarity with tropical curves.

In all of our examples, we display the markings with nonzero 
tangencies to the boundary $\partial \PP^2.$ At each of the 
remaining markings we impose a point constraint. A virtual 
dimension check shows that, if we have $n'$ markings with 
positive tangency, there should be $n'-1$ markings with no 
tangencies carrying point constraints. Therefore there are 
$n = 2n'-1$ markings in total here.

\begin{example}
    We begin with the case of maximal contact discussed in the
    Introduction. 
    \begin{table}[H]
        \centering
        \begin{tabular}{c|c c c}
             & $\, H_1 \,$ & $\, H_2 \,$ & $\, H_3 \,$ \\ \hline
             $x_1$ & $d$ & 0 &  0\\ 
             $x_2$ & 0 &  $d$ & 0 \\ 
             $x_3$ & 0 & 0 & $d$ \\ 
        \end{tabular}
        \label{tangencies1}
    \end{table}
    There is only one tropical curve with three legs, as 
    pictured below. 
\begin{figure}[H]
\begin{tikzpicture}[scale=0.7]
 \draw[thick] (0,0) -- (3,0);        
 \draw[thick] (0,0) -- (0,3);       
 \draw[thick] (0,0) -- (-2,-2);      

 \draw[thick, red] (-0.25,1.25) -- (0.25,1.75);
 \draw[thick, red] (-0.25,1.75) -- (0.25, 1.25);
 \draw[thick, red] (1.25,-0.25) -- (1.75,0.25);
 \draw[thick, red] (1.75,-0.25) -- (1.25,0.25);
\end{tikzpicture}
\end{figure}
    This contributes a multiplicity of 
    $\det \begin{pmatrix}
        d & 0 \\ 0 & d
    \end{pmatrix} = d^2,$ so in this case the exact invariant 
    is $d^2.$

    Meanwhile, the upper bound is $(d+d+d)^{(2\cdot3 -1)-3} = 
    9d^2.$ In this case the bound is off only by a constant 
    factor.
\end{example}

In fact, whenever only three points carry positive tangency 
with respect to the boundary the simple analysis above gives 
way to a general argument. One can therefore check the upper
bound by hand in this case. 

\begin{proposition}
    Theorem \ref{Main} is true when $k=2$ and exactly 3 
    marked points are tangent to $\partial \PP^2.$
\end{proposition}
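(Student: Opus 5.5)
We verify the bound directly by computing the invariant exactly via Mikhalkin's correspondence theorem and comparing it with the right-hand side of Theorem \ref{Main}. We interpret ``exactly 3 marked points tangent to $\partial \PP^2$'' in the setting of this section: the remaining markings carry point constraints, so $n' = 3$ and $n = 5$, with two point constraints. Hence $n-3 = 2$, and since a point constraint is imposed at an untangent marking, $\upnu = k = 2$ and the binomial coefficient equals $1$. With all $d_i = 1$ by multilinearity, the claim to prove is
\[ N \leq \left( \upalpha_1^{\max} + \upalpha_2^{\max} + \upalpha_3^{\max} \right)^2, \]
where $x_1, x_2, x_3$ are the tangent markings.

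\textbf{Step 1: the tropical count.} The three tangent markings determine three unbounded legs with directions $v_1, v_2, v_3 \in N \cong \Z^2$ summing to zero. Any rational tropical curve with exactly three unbounded ends, passing through two generic points, is a tripod with a single trivalent vertex. One point constraint fixes the vertex up to translation, and the second point lies on one of the legs. There are finitely many such tripods: through two generic points there is one for each choice of leg containing the second point, after fixing which point is the vertex-side constraint, or more precisely by counting ordered configurations. Each tripod contributes its Mikhalkin multiplicity $\lvert \det(v_a, v_b) \rvert$, which is the same for any pair because of the balancing condition. I would carefully count the tripods, accounting for the labelling of the two point markings and any automorphism factors. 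The result should be
\[ N = c \, \lvert \det(v_1, v_2) \rvert \]
for a small absolute constant $c$, which I expect to be at most $3$ and likely $1$, as in the maximal contact example.

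\textbf{Step 2: an elementary inequality.} Write each $v_a$ in terms of the primitive rays of the fan of $\PP^2$, namely $(1,0)$, $(0,1)$ and $(-1,-1)$, with nonnegative coefficients $\upalpha_{aj}$. Then the Euclidean-type estimate $\lvert v_a \rvert_\infty \leq \upalpha_a^{\max}$ holds, because at most two coefficients are nonzero and they lie in adjacent cones. Hence
\[ \lvert \det(v_1, v_2) \rvert \leq 2 \, \upalpha_1^{\max} \upalpha_2^{\max}. \]
Since $\det(v_1,v_2) = \det(v_2,v_3) = \det(v_3,v_1)$, the quantity $\lvert\det\rvert$ is bounded by $2 \min$ over pairs of $\upalpha_a^{\max}\upalpha_b^{\max}$. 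We then need
\[ c \cdot 2\, \upalpha_a^{\max} \upalpha_b^{\max} \leq \left( \sum \upalpha^{\max} \right)^2, \]
and this follows from AM--GM: $(\sum)^2 \geq 3\sum_{a<b} \upalpha_a^{\max}\upalpha_b^{\max} \geq 9 \min$, which gives room for $c \leq 4$.

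\textbf{Main obstacle.} The delicate step is Step 1: correctly pinning down the tropical enumeration and the constant $c$. This involves deciding which leg carries the second point and whether the vertex sits at a point constraint, as well as confirming that Mikhalkin's theorem applies to these log invariants with labelled tangency markings without extra factors. I would first check the bookkeeping against the maximal contact example, where the exact value $d^2$ is known, and then run the general argument.
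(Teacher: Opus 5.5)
\textbf{Step 2 is fine.} It is a different and slightly sharper finish than the paper's. The paper expands the determinant, drops the negative terms and replaces each entry by its row maximum to reach $\frac43(a_{\max}b_{\max}+b_{\max}c_{\max}+c_{\max}a_{\max})$. Your bound $\lvert v_a\rvert_\infty\le\upalpha_a^{\max}$, combined with $\det(v_1,v_2)=\det(v_2,v_3)=\det(v_3,v_1)$, gives $\lvert\det\rvert\le 2\min_{a<b}\upalpha_a^{\max}\upalpha_b^{\max}\le\frac29\bigl(\sum_a\upalpha_a^{\max}\bigr)^2$.

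\textbf{The gap is Step 1.} You never determine $c$, and your inequality only closes for $c\le 4$. Your sketch also points the wrong way. The vertex is not fixed by a point constraint: generically both constraint points lie in the interiors of legs. Nor is there one tripod for each choice of leg carrying the second point. Counting one tropical curve per ordered pair of distinct legs would give $c=6$, and then $2c\min\le 9\min$ fails. So ``$c$ is a small constant'' is not a detail you can defer.

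\textbf{How to close it.} A tripod with vertex $V$ passes through generic $P,Q$ only if $P=V+sv_a$ and $Q=V+tv_b$ with $s,t\ge0$ and $a\neq b$ (both points on one leg is non-generic). Equivalently, $P-Q\in\operatorname{cone}(v_a,-v_b)$. If $\det(v_a,v_b)=0$, all legs are collinear, no curve meets two generic points, and the invariant is $0$. Otherwise this membership determines $s,t$, and hence $V$, uniquely. Because $v_1+v_2+v_3=0$, the six cones $\operatorname{cone}(v_a,-v_b)$ with $a\neq b$ tile $\RR^2$ with disjoint interiors. To see this, apply the real linear change of coordinates $v_1\mapsto(1,0)$, $v_2\mapsto(0,1)$. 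The six cones become the second and fourth quadrants, together with the two halves (cut by the diagonal) of the first and of the third quadrant.

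Hence exactly one tropical curve passes through two generic points. All markings are labelled, so there are no automorphisms. Therefore $c=1$ and the invariant equals $\lvert\det(v_1,v_2)\rvert$, which is exactly the paper's expression (one third of a sum of three equal determinants). The sanity check you propose confirms that no extra weight factors enter: the maps $[X:Y]\mapsto[\uplambda_0X^d:\uplambda_1(X-Y)^d:\uplambda_2Y^d]$ through two generic points number exactly $d^2=\lvert\det(de_1,de_2)\rvert$. With $c=1$ established, your Step 2 completes the proof with room to spare.
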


\begin{proof}
    Label the tangencies as follows 
    \begin{table}[H]
        \centering
        \begin{tabular}{c|c c c}
             & $\, H_1 \,$ & $\, H_2 \,$ & $\, H_3 \,$ \\ \hline
             $x_1$ & $a_1$ & $a_2$ & $a_3$ \\ 
             $x_2$ & $b_1$ & $b_2$ & $b_3$ \\ 
             $x_3$ & $c_1$ & $c_2$ & $c_3$ \\ 
        \end{tabular}
        \label{tangencies2}
    \end{table}
    Note that at least one entry per row is 0 since the 
    intersection of the $H_j$ is empty.
    There is a unique tropical curve with these data, 
    contributing a multiplicity of 
    \[N = \det \begin{pmatrix} a_1-a_3 & b_1-b_3\\ a_2-a_3 & b_2 - b_3 \end{pmatrix} = \det \begin{pmatrix}
    b_1-b_3 & c_1 - c_3 \\ b_2 - b_3 & c_2 - c_3 \end{pmatrix} = \det \begin{pmatrix} c_1 - c_3
    & a_1 - a_3 \\ c_2 - c_3 & a_2 - a_3 \end{pmatrix}\]
    to the tropical count.
    There is only one way to pass it through two generic
    points in $\RR^2,$ so the invariant is 
    \begin{multline} \label{eq3}
    \frac13 \big[ (a_1 - a_3)(b_2-b_3) - (a_2-a_3) (b_1-b_3) + \\
    (b_1-b_3)(c_2-c_3) - (b_2-b_3)(c_1-c_3)
    + \\ (c_1 - c_3)(a_2-a_3) - (c_2-c_3) (a_1-a_3) \big].
    \end{multline}
    Meanwhile the upper bound is
    \begin{equation} \label{eq4}
    \left( a_{\max} + b_{\max} + c_{\max}\right)^2.
    \end{equation}
    That the former is bounded above by the latter is then a 
    quick piece of algebra; after expanding out 
    \eqref{eq3} and removing the negative terms, and
    replacing $a_i, b_i, c_i$ by their respective maxima, 
    the claimed inequality 
    \eqref{eq3} $\leq$ \eqref{eq4} is implied by 
    \[\frac43 (a_{\max} b_{\max} + b_{\max} c_{\max} +
    c_{\max} a_{\max}) \leq (a_{\max} + b_{\max} + 
    c_{\max})^2. \qedhere \] 
\end{proof}

\begin{example}
We now look at an example where four marked points are tangent to
the toric boundary.
    \begin{table}[H]
        \centering
        \begin{tabular}{c|c c c}
             & $\, H_1 \,$ & $\, H_2 \,$ & $\, H_3 \,$ \\ \hline
             $x_1$ & $d$ & 0 & 0 \\ 
             $x_2$ & 0 & $d-1$ & 0 \\ 
             $x_3$ & 0 & 1 & 0 \\
             $x_4$ & 0 & 0 & $d$
        \end{tabular}
        \label{tangencies3}
    \end{table}
    The tropical curves which contribute are the following. 
    \begin{figure}[H]
        \centering
        \begin{tikzpicture}[scale = 0.7]
       \draw[ultra thick] (0,0) -- (3,0);
       \draw[thick] (0,0) -- (0,3);
       \draw[thick] (0,0) -- (-0.6,-0.8);
       \draw[thick] (-0.6,-0.8) -- (2.4,-0.8);
       \draw[thick] (-0.6,-0.8) -- (-2,-2.2);
        \draw[thick, red] (-0.25,1.25) -- (0.25,1.75);
        \draw[thick, red] (-0.25,1.75) -- (0.25, 1.25);
        \draw[thick, red] (1.25,-0.25) -- (1.75,0.25);
        \draw[thick, red] (1.75,-0.25) -- (1.25,0.25);
       \draw[thick,red] (-1.3-0.25, -1.5-0.35) --
       (-1.3+0.25,-1.5+0.35);
       \draw[thick,red] (-1.3-0.25,-1.5+0.35) --
       (-1.3+0.25,-1.5-0.35);
   \end{tikzpicture}
   \begin{tikzpicture}[scale = 0.7]
       \draw[thick] (0,0) -- (3,0);
       \draw[thick] (0,0) -- (0,3);
       \draw[thick] (0,0) -- (-0.25,-1);
       \draw[ultra thick] (-0.25,-1) -- (2.4,-1);
       \draw[thick] (-0.25,-1) -- (-2,-2.2);
        \draw[thick, red] (-0.25,1.25) -- (0.25,1.75);
        \draw[thick, red] (-0.25,1.75) -- (0.25, 1.25);
        \draw[thick, red] (1.25,-0.25) -- (1.75,0.25);
        \draw[thick, red] (1.75,-0.25) -- (1.25,0.25);
        \draw[thick,red] (-1.125-0.25, -1.6-0.25) --
       (-1.125+0.25,-1.6+0.25);
       \draw[thick,red] (-1.125-0.25,-1.6+0.25) --
       (-1.125+0.25,-1.6-0.25);
   \end{tikzpicture}
    \end{figure}
    Note that the finite edges have different slopes, 
    accounting for the fact that the two horizontal edges
    have weights 1 and $d-1$, distinguished in the diagram
    by the thickness of the ray. Each tropical curve 
    is weighted with multiplicity $d \cdot d(d-1)$, and can be 
    passed through 3 generic points in $\RR^2$ in a unique way.
    The invariant is therefore \[d^2(d-1) + d^2 (d-1) = 2d^2(d-1).\]
    
    Meanwhile, the upper bound is $(d+d+(d-1)+1)^4 = 81d^4$.
\end{example}

\begin{example}
Finally, we discuss another example with four points of tangency
to the boundary, but with three points incident to one of the 
components.
    \begin{table}[H]
        \centering
        \begin{tabular}{c|c c c}
             & $\, H_1 \,$ & $\, H_2 \,$ & $\, H_3 \,$ \\ \hline
             $x_1$ & $d$ & $d-2$ & 0 \\ 
             $x_2$ & 0 & 1 & 0 \\ 
             $x_3$ & 0 & 1 & 0 \\
             $x_4$ & 0 & 0 & $d$
        \end{tabular}
        \label{tangencies4}
    \end{table}
    The only tropical curve contributing to the invariant is 

    \begin{figure}[H]
    \centering
    \begin{tikzpicture}[scale = 1.5]
       \draw[thick] (0,0) -- (2,0.6666);
       \draw[thick] (0,0) -- (2,0);
       \draw[thick] (0,0) -- (-0.5, -0.6666);
       \draw[thick] (-0.5, -0.6666) -- (1.6666, -0.6666);
       \draw[thick] (-0.5, -0.6666) -- (-0.5-0.7, -0.6666-0.7);
       \draw[thick,red] (1-0.1,0.3333-0.1) --
       (1+0.1,0.3333+0.1);
       \draw[thick,red] (1-0.1,0.3333+0.1) --
       (1+0.1,0.3333-0.1);
       \draw[thick,red] (1.5-0.1,-0.1) -- (1.5+0.1,+0.1);
       \draw[thick,red] (1.5-0.1,+0.1) -- (1.5+0.1,-0.1);
       \draw[thick,red] (1.1-0.1,-0.6666-0.1) --
       (1.1+0.1,-0.6666+0.1);
       \draw[thick,red] (1.1-0.1,-0.6666+0.1) --
       (1.1+0.1,-0.6666-0.1);
   \end{tikzpicture}
    \end{figure}
    This has multiplicity $d\cdot d$, and can be passed through 
    3 generic points in $\RR^2$ in a unique way.
    Therefore the actual invariant is $d^2$. 
    
    By contrast, the upper bound is $(2d+2)^4$.
\end{example}

\printbibliography

@article{Ranganathan_2017,
   title={Skeletons of Stable Maps I: Rational Curves in Toric Varieties},
   volume={95},
   number={3},
   journal={Journal of the London Mathematical Society},
   publisher={Wiley},
   author={Ranganathan, Dhruv},
   year={2017},
   month=feb, 
   pages={804–832} }

@article {mikhalkin2004enumerativetropicalalgebraicgeometry,
    AUTHOR = {Mikhalkin, Grigory},
     TITLE = {Enumerative tropical algebraic geometry in {$\RR^2$}},
   JOURNAL = {J. Amer. Math. Soc.},
  FJOURNAL = {Journal of the American Mathematical Society},
    VOLUME = {18},
      YEAR = {2005},
    NUMBER = {2},
     PAGES = {313--377},
   MRCLASS = {14N10 (05A99 14N35 52B70)},
  MRNUMBER = {2137980},
MRREVIEWER = {Charles\ D.\ Cadman},
    
}

@article {GS2013logarithmic,
    AUTHOR = {Gross, Mark and Siebert, Bernd},
     TITLE = {Logarithmic {G}romov-{W}itten invariants},
   JOURNAL = {J. Amer. Math. Soc.},
  FJOURNAL = {Journal of the American Mathematical Society},
    VOLUME = {26},
      YEAR = {2013},
    NUMBER = {2},
     PAGES = {451--510},
   MRCLASS = {14N35 (14D23)},
  MRNUMBER = {3011419},
MRREVIEWER = {Hsian-Hua\ Tseng},
}

@book {FultonIntersection,
    AUTHOR = {Fulton, William},
     TITLE = {Intersection theory},
    SERIES = {Ergebnisse der Mathematik und ihrer Grenzgebiete. 3. Folge. A
              Series of Modern Surveys in Mathematics [Results in
              Mathematics and Related Areas. 3rd Series. A Series of Modern
              Surveys in Mathematics]},
    VOLUME = {2},
   EDITION = {Second},
 PUBLISHER = {Springer-Verlag, Berlin},
      YEAR = {1998},
     PAGES = {xiv+470},
   MRCLASS = {14C17 (14-02)},
  MRNUMBER = {1644323},
}

@article {AbramChen,
    AUTHOR = {Abramovich, Dan and Chen, Qile},
     TITLE = {Stable logarithmic maps to {D}eligne-{F}altings pairs {II}},
   JOURNAL = {Asian J. Math.},
  FJOURNAL = {Asian Journal of Mathematics},
    VOLUME = {18},
      YEAR = {2014},
    NUMBER = {3},
     PAGES = {465--488},
   MRCLASS = {14D23 (14A20 14H10 14N35)},
  MRNUMBER = {3257836},
MRREVIEWER = {Jonathan\ Wise},
}

@article {Chen,
    AUTHOR = {Chen, Qile},
     TITLE = {Stable logarithmic maps to {D}eligne-{F}altings pairs {I}},
   JOURNAL = {Ann. of Math. (2)},
  FJOURNAL = {Annals of Mathematics. Second Series},
    VOLUME = {180},
      YEAR = {2014},
    NUMBER = {2},
     PAGES = {455--521},
   MRCLASS = {14N35 (14D23)},
  MRNUMBER = {3224717},
MRREVIEWER = {Sergiy\ Koshkin},
}

@incollection {KontsevichManin,
    AUTHOR = {Kontsevich, M. and Manin, Yu.},
     TITLE = {Gromov-{W}itten classes, quantum cohomology, and enumerative
              geometry [{MR}1291244 (95i:14049)]},
 BOOKTITLE = {Mirror symmetry, {II}},
    SERIES = {AMS/IP Stud. Adv. Math.},
    VOLUME = {1},
     PAGES = {607--653},
 PUBLISHER = {Amer. Math. Soc., Providence, RI},
      YEAR = {1997},
   MRCLASS = {14N10 (53C15 58A10 58F05)},
  MRNUMBER = {1416351},
}

@misc{FNSPositivity,
      title={Counting point configurations in projective space}, 
      author={Alex Fink and Navid Nabijou and Rob Silversmith},
      year={2026},
      eprint={2601.15421},
      archivePrefix={arXiv},
      primaryClass={math.AG},
}

@article {FultonSturmfels,
    AUTHOR = {Fulton, William and Sturmfels, Bernd},
     TITLE = {Intersection theory on toric varieties},
   JOURNAL = {Topology},
  FJOURNAL = {Topology. An International Journal of Mathematics},
    VOLUME = {36},
      YEAR = {1997},
    NUMBER = {2},
     PAGES = {335--353},
   MRCLASS = {14M25 (14C17 52B20)},
  MRNUMBER = {1415592},
MRREVIEWER = {Michel\ Brion},
}

@article {CoxHomog,
    AUTHOR = {Cox, David A.},
     TITLE = {The homogeneous coordinate ring of a toric variety},
   JOURNAL = {J. Algebraic Geom.},
  FJOURNAL = {Journal of Algebraic Geometry},
    VOLUME = {4},
      YEAR = {1995},
    NUMBER = {1},
     PAGES = {17--50},
   MRCLASS = {14M25},
  MRNUMBER = {1299003},
MRREVIEWER = {Mina\ Teicher},
}

@article {BELL,
    AUTHOR = {Brakensiek, Joshua and Eur, Christopher and Larson, Matt and
              Li, Shiyue},
     TITLE = {Kapranov degrees},
   JOURNAL = {Int. Math. Res. Not. IMRN},
  FJOURNAL = {International Mathematics Research Notices. IMRN},
      YEAR = {2025},
    NUMBER = {20},
     PAGES = {Paper No. rnaf306, 16},
   MRCLASS = {14H10 (05E14)},
  MRNUMBER = {4970179},
}

@article {Kleiman,
    AUTHOR = {Kleiman, Steven L.},
     TITLE = {The transversality of a general translate},
   JOURNAL = {Compositio Math.},
  FJOURNAL = {Compositio Mathematica},
    VOLUME = {28},
      YEAR = {1974},
     PAGES = {287--297},
   MRCLASS = {14M15 (14N10)},
  MRNUMBER = {360616},
MRREVIEWER = {Joel\ Roberts},
}

@article {AbramovichWise,
    AUTHOR = {Abramovich, Dan and Wise, Jonathan},
     TITLE = {Birational invariance in logarithmic {G}romov-{W}itten theory},
   JOURNAL = {Compos. Math.},
  FJOURNAL = {Compositio Mathematica},
    VOLUME = {154},
      YEAR = {2018},
    NUMBER = {3},
     PAGES = {595--620},
   MRCLASS = {14N35 (14A20 14D23 14E05 14H10)},
  MRNUMBER = {3778185},
MRREVIEWER = {Hsian-Hua\ Tseng},
}
\end{document}